\newtheorem{thm}{Theorem}[section]
\newtheorem{cor}[thm]{Corollary}
\newtheorem{lem}[thm]{Lemma}
\theoremstyle{definition}
\theoremstyle{remark}
\begin{document}

\title{Gray Codes and Overlap Cycles for Restricted Weight Words}%
\author{Victoria Horan\thanks{\texttt{victoria.horan.1@us.af.mil}} \\  Air Force Research Laboratory \\ Information Directorate \\ \\ Glenn Hurlbert\thanks{\texttt{hurlbert@asu.edu}} \\ School of Mathematical and Statistical Sciences \\ Arizona State University}%


\maketitle

\begin{abstract}
    A Gray code is a listing structure for a set of combinatorial objects such that some consistent (usually minimal) change property is maintained throughout adjacent elements in the list.  While Gray codes for $m$-ary strings have been considered in the past, we provide a new, simple Gray code for fixed-weight $m$-ary strings.  In addition, we consider a relatively new type of Gray code known as overlap cycles and prove basic existence results concerning overlap cycles for fixed-weight and weight-range $m$-ary words.
\end{abstract}


\let\thefootnote\relax\footnote{Approved for public release; distribution unlimited:  88ABW-2014-0956, 07 Mar 2014}

\section{Introduction}

Gray codes were originally developed by Frank Gray \cite{Gray} as a method of listing binary $n$-tuples so that successive words differ in only one position. The term Gray code has now come to mean a listing of a set $\mathcal{C}$ of combinatorial objects in which successive words differ in some predefined manner, usually a consistent minimal change.  Since Gray's original code, these listings have been studied extensively and have seen use in many different applications such as rotary encoders \cite{rotary} and error detection and correction \cite{errors}.

Define the set of all strings of length $n$ over the alphabet $\{0,1, \ldots , m-1\}$ to be $\mathcal{B}(m,n)$, the set of all \textbf{$m$-ary strings of length $n$}.  Given a string $x=x_1x_2 \ldots x_n$, the \textbf{weight} of $x$ is defined to be $wt(x)=\sum_{i=1}^n x_i$.  Then the set of all weight $k$ $m$-ary strings of length $n$ is denoted $\mathcal{B}_k(m,n)$.  We are interested in two kinds of Gray codes.  The first kind requires the set of fixed-weight strings to be ordered such that they change in exactly two positions (Section \ref{GC}), and the second kind are $s$-overlap cycles, which require that adjacent strings overlap in a specific way (Section \ref{OCycles}).

\section{Gray Codes}\label{GC}

Gray codes for $m$-ary words have been studied extensively in the past.  The first main theorem considers all $m$-ary words of length $n$.

\begin{thm}
    \emph{\cite{Er}}  For every $n,m \in \mathbb{Z}^+$, there exists a Gray code listing for $\mathcal{B}(m,n)$ so that each word differs from its successor in exactly one position.
\end{thm}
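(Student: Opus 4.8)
The plan is to prove this by induction on $n$, producing the classical reflected $m$-ary Gray code. For the base case $n=1$, the listing $0,1,\ldots,m-1$ visits every element of $\mathcal{B}(m,1)$ exactly once, and since each word has only a single position, consecutive words trivially differ in exactly that one position.

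For the inductive step, assume $L_{n-1} = (w^{(1)}, w^{(2)}, \ldots, w^{(N)})$ is such a listing of $\mathcal{B}(m,n-1)$, where $N = m^{n-1}$. I would build $L_n$ as a concatenation of $m$ blocks $B_0, B_1, \ldots, B_{m-1}$, where block $B_d$ consists of the words of $L_{n-1}$ each with the digit $d$ prepended, taken in forward order when $d$ is even and in reverse order when $d$ is odd. Reading $B_0, B_1, \ldots, B_{m-1}$ in sequence gives a listing of all $m \cdot m^{n-1} = m^n$ words of $\mathcal{B}(m,n)$, each exactly once, since each block ranges over all of $\mathcal{B}(m,n-1)$ in the last $n-1$ coordinates while carrying a distinct leading digit.

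It then remains to check the single-position-change property in two cases. Within a block $B_d$, consecutive words share the leading digit $d$ and differ only in the last $n-1$ coordinates; by the inductive hypothesis those coordinates change in exactly one position. At the seam between $B_d$ and $B_{d+1}$, the alternation of direction is exactly what forces the two adjacent words to agree on the last $n-1$ coordinates: when $d$ is even, $B_d$ ends with the suffix $w^{(N)}$ and the reversed block $B_{d+1}$ begins with the suffix $w^{(N)}$; when $d$ is odd, $B_d$ ends with the suffix $w^{(1)}$ and the forward block $B_{d+1}$ begins with the suffix $w^{(1)}$. In either case only the leading digit changes, from $d$ to $d+1$, so the crossing between blocks is a one-position change as well.

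I expect the seam analysis to be the crux of the argument: verifying that the reflection makes consecutive blocks share their boundary suffix is the one place where the construction could fail, and it is precisely the reason the reflection is needed rather than simply repeating $L_{n-1}$ in forward order $m$ times. Everything else reduces to bookkeeping together with the inductive hypothesis, and the theorem asks only for a linear listing, so no further care about closing the list into a cycle is required.
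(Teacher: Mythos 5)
Your proposal is correct, but note that the paper does not prove this statement at all: it is quoted as a known result with a citation to Er, and the paper's own proofs begin only with the fixed-weight analogue (Algorithm FWM). What you have written is the classical reflected $m$-ary Gray code, which is essentially Er's construction, so in effect you have supplied the external proof the paper delegates to its reference. All the steps check out: the block $B_d$ contributes each word of $\mathcal{B}(m,n-1)$ once with leading digit $d$, within-block adjacencies are handled by the inductive hypothesis, and your seam analysis is right --- when $d$ is even the forward block ends at $w^{(N)}$ and the reflected block $B_{d+1}$ starts at $w^{(N)}$, and symmetrically at odd $d$, so only the leading digit changes. You are also correct that only a linear listing is demanded (for odd $m$ the reflected construction is not cyclic, so this caveat matters). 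It is worth observing that your reflection-by-parity device is exactly the idea the paper itself borrows for its FWM algorithm, where \textsf{Expo}$(L,e)$ reverses the sublist according to the parity of the prefix sum; your inductive seam argument is the unrestricted-weight ancestor of the paper's Lemma \ref{GCL1} and Corollaries \ref{GCC1}--\ref{GCC2}, which do the analogous first-element/last-element bookkeeping in the fixed-weight setting.
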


For fixed-weight $m$-ary words of length $n$, the following theorem is known and published in \cite{Walsh}, however we provide a new and simpler algorithm.

\begin{thm}
    For every $n,m,k \in \mathbb{Z}^+$, there exists a Gray code listing for $\mathcal{B}_k(m,n)$ in which successive words differ in at most two positions.
\end{thm}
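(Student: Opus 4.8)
First I would record a simple observation that shapes the entire construction: since every string in $\mathcal{B}_k(m,n)$ has weight exactly $k$, two \emph{distinct} such strings can never differ in only one position, because altering a single coordinate changes the weight. Hence ``differ in at most two positions'' is really the requirement that consecutive words differ in exactly two positions through a balanced transfer of weight, one coordinate going up and another going down by the same amount. I would in fact aim for the stronger minimal change in which one coordinate increases by $1$ while another decreases by $1$. The plan is then to build such a listing by induction on the length $n$, splitting $\mathcal{B}_k(m,n)$ according to the value of the last coordinate.

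For a fixed value $x_n=j$, the prefixes $x_1\cdots x_{n-1}$ range over exactly $\mathcal{B}_{k-j}(m,n-1)$, so by the inductive hypothesis each block $j$ inherits a Gray code $G_{k-j}$ on length $n-1$. I would order the blocks by increasing $j$ over the feasible range $\max(0,\,k-(m-1)(n-1))\le j\le\min(m-1,k)$ and traverse them alternately forward and backward, in the manner of a reflected Gray code. At a boundary between block $j$ and block $j+1$ the coordinate $x_n$ changes by $1$; to keep the Hamming distance equal to $2$ I need the two exposed prefix-endpoints to differ in exactly one coordinate by $1$, so that the $+1$ in $x_n$ is compensated by a $-1$ in the prefix. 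The reflected traversal causes consecutive boundaries to expose, alternately, the pair of \emph{first} words and the pair of \emph{last} words of $G_{k-j}$ and $G_{k-j-1}$.

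To make this gluing work I would strengthen the inductive hypothesis so as to pin down the endpoints of each $G_w$. A convenient choice is to require that $G_w$ begin at the canonical word $L(w)$ obtained by packing all of the weight $w$ as far to the left as possible. One checks that $L(w)$ and $L(w-1)$ differ in exactly one coordinate, by $1$, namely the unique ``frontier'' coordinate, so that the first-word pairs chain correctly; and, crucially, that this specification is consistent with the recursion, since the smallest feasible value of $x_n$ in $L(w)$ equals $\max(0,\,w-(m-1)(n-1))=j_{\min}$, and deleting it leaves precisely $L(w-j_{\min})$ on length $n-1$. I would then need to identify the resulting last word of $G_w$ and prove that the last-word pairs chain in the same single-coordinate fashion.

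The principal obstacle is exactly this endpoint bookkeeping. Because the number of blocks, $j_{\max}-j_{\min}+1$, has a parity that depends on $w$, the reflected traversal exposes either the first end or the last end of each $G_w$ in a way that is not uniform in $w$; so the real work is to show that the first-word chain and the last-word chain hold \emph{simultaneously} and remain consistent with the recursive definition of the endpoints. I expect to handle this by choosing the two canonical endpoints symmetrically, verifying the one-coordinate compensation at every boundary by induction, and treating separately the degenerate cases in which a block is a singleton or in which $j_{\min}>0$ forces the leftmost coordinates to be saturated.
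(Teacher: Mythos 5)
Your plan is essentially the paper's own construction (Algorithm FWM) in mirror image: the paper recurses on the \emph{first} coordinate and packs the canonical word's weight to the right, while you recurse on the last coordinate and pack left, but the block decomposition, the reflected traversal, and the first-word chain (your $L(w)$ versus $L(w-1)$ claim is exactly Corollary \ref{GCC1}) all coincide. The problem is that the proposal stops at precisely the point you yourself call ``the principal obstacle'': you never determine the last word of $G_w$, and you never prove that the last words of $G_w$ and $G_{w-1}$ differ in exactly one coordinate. That statement (the paper's Corollary \ref{GCC2}) is the technical heart of the whole proof, and ``I expect to handle this by choosing the two canonical endpoints symmetrically'' is not available as a move: once you pin the first word and fix the reflection pattern, the last word is \emph{forced} by the recursion, not chosen, so it must be computed and shown to chain. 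Without that, the induction does not close.

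Two concrete ideas from the paper fill this gap, and you would need both (or equivalents). First, tie the orientation of block $j$ to the parity of the letter $j$ itself (the paper's $\textsf{Expo}(\cdot,i)$), not to the block's position among the nonempty blocks; this makes the endpoint formulas uniform in $w$, dissolves your worry about the parity of $j_{\max}-j_{\min}+1$, and renders your $j_{\min}>0$ degenerate case harmless, since when $j_{\min}>0$ the first feasible block consists of the single word of maximal prefix weight $(m-1)(n-1)$, so its orientation is immaterial. Second, with that convention the last word has a closed form (Lemma \ref{GCL1}): writing $u_1=\min\{m-1,k\}$ and $k-u_1=q'(m-1)+r'$ with $0\leq r'<m-1$, the last word is $u_1(m-1)\cdots(m-1)r'0\cdots 0$ when $u_1$ is even and $u_10\cdots 0r'(m-1)\cdots(m-1)$ when $u_1$ is odd. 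The key trick in proving this is that when the final block is reversed, its last element is the \emph{first} element of the unreversed block, i.e.\ the canonical minimum you already control --- so the odd case reduces to the first-word statement, and only the even case genuinely recurses. Corollary \ref{GCC2} then follows by induction on $n$, invoking Corollary \ref{GCC1} in the odd case and the induction hypothesis in the even case. Until you carry out this computation (translated to your left-packed, last-coordinate convention), what you have is a correct architecture with its load-bearing lemma still unproved.
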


Note that a change in two positions is best possible, as a change in just one would alter the weight of the word.  In this section we will first present our algorithm, then provide an example, and finally will prove that our algorithm is correct.

\subsection{The New Algorithm}

We would like to produce a Gray code for the set of fixed-weight $m$-ary strings in which successive elements differ in at most two positions.  First, we define a few simple functions that will be used in the algorithm.  Given a list $L$, \textsf{Rev}$(L)$ produces the list in reversed order.  The second function needed is the exponent function, defined as follows. $$\hbox{\textsf{Expo}}(L,e) = \left\{
                                                                                     \begin{array}{ll}
                                                                                       L, & \hbox{if $e\equiv 0 \bmod 2$;} \\
                                                                                       \hbox{\textsf{Rev}}(L), & \hbox{otherwise.}
                                                                                     \end{array}
                                                                                   \right.$$
Finally, the function \textsf{Pref}$(a,L)$ adds the prefix $a$ to every string in list $L$.  This operation will also be denoted $a \oplus L$.  Now we can provide the following algorithm, borrowing the reflection idea from Gray's original algorithm for binary words \cite{Gray}.  Our algorithm is presented in Figure \ref{euclid}.

\begin{figure}
\begin{algorithmic}[1]
\Procedure{FWM}{$m, n, k$}\Comment{$m$-ary words of length $n$ and weight $k$}
\State $L\gets [ \hbox{ }]$
\If{$(m-1)n \geq k$ \textbf{and} $k \geq 0$}
\If{$(m-1)n=0$}
\State $L \gets [[\hbox{ }]]$
\EndIf\label{euclidendwhile}
\For{$i=0:\min(m-1,k)$}
\State $M \gets$ \textsf{Pref}($i, \hbox{\textsf{Expo}}(FWM(m,n-1,k-i),i))$
\State $L \gets L, M$ \Comment{Append $M$ to $L$}
\EndFor\label{euclidendwhile}
\EndIf\label{euclidendwhile}
\State \textbf{return} $L$
\EndProcedure
\end{algorithmic}
\caption{Fixed-Weight $m$-ary Gray Code Algorithm}\label{euclid}
\end{figure}

\subsection{Example}

We will work through an example of the algorithm when we run FWM$(3,4,5)$.

\begin{itemize}

\item We begin with $L$, an empty list.

\item Starting with $i=0$, we need to determine $$M = 0 \oplus \hbox{FWM}(3,3,5) = \hbox{\textsf{Pref}}(0,\hbox{\textsf{Expo}}(\hbox{FWM}(3,3,5),0)).$$

\begin{itemize}

\item  We begin by determining the sublist, FWM$(3,3,5)$. $$\hbox{FWM}(3,3,5) = [0 \oplus \hbox{FWM}(3,2,5),1 \oplus \hbox{\textsf{Rev}}(\hbox{FWM}(3,2,4)),2 \oplus \hbox{FWM}(3,2,3)].$$

\item  For these sublists, we have the following. $$\begin{array}{c|cc} \hline 1 & 2 & 2 \\ \hline 2 & 1 & 2 \\ 2 & 2 & 1 \end{array}$$

\end{itemize}

\item  Now we must precede this sublist by $0$ to determine $M$. $$\begin{array}{c||c|cc} \hline 0 & 1 & 2 & 2 \\ \hline 0 & 2 & 1 & 2 \\ 0 & 2 & 2 & 1 \\ \hline \hline \end{array}$$

\item  When $i=1$, we have the following sublist. $$\begin{array}{c||c|cc} \hline \hline 1 & 2 & 2 & 0 \\ 1 & 2 & 1 & 1 \\ 1 & 2 & 0 & 2 \\ \hline 1 & 1 & 1 & 2 \\ 1 & 1 & 2 & 1 \\ \hline 1 & 0 & 2 & 2 \\ \hline \hline \end{array}$$

\item  When $i=2$, our list finishes with the following sublist. $$\begin{array}{c||c|cc} \hline \hline 2 & 0 & 1 & 2 \\ 2 & 0 & 2 & 1 \\ \hline 2 & 1 & 2 & 0 \\ 2 & 1 & 1 & 1 \\ 2 & 1 & 0 & 2 \\ \hline 2 & 2 & 0 & 1 \\ 2 & 2 & 1 & 0 \end{array}$$

\end{itemize}

These sublists combine to produce our final list, shown in Figure \ref{GCEx}.

\begin{figure}
$$\begin{array}{c||c|cc}
    0 & 1 & 2 & 2 \\
    \hline
    0 & 2 & 1 & 2 \\
    0 & 2 & 2 & 1 \\
    \hline
    \hline
    1 & 2 & 2 & 0 \\
    1 & 2 & 1 & 1 \\
    1 & 2 & 0 & 2 \\
    \hline
    1 & 1 & 1 & 2 \\
    1 & 1 & 2 & 1 \\
    \hline
    1 & 0 & 2 & 2 \\
    \hline
    \hline
    2 & 0 & 1 & 2 \\
    2 & 0 & 2 & 1 \\
    \hline
    2 & 1 & 2 & 0 \\
    2 & 1 & 1 & 1 \\
    2 & 1 & 0 & 2 \\
    \hline
    2 & 2 & 0 & 1 \\
    2 & 2 & 1 & 0
\end{array}$$
\caption{List Produced by FWM$(3,4,5)$}\label{GCEx}
\end{figure}

In this example, double lines indicate changes in our outer loop, while single lines indicate changes in the secondary loop.

\subsection{Proof of Correctness}

First, we describe more clearly what is happening in Algorithm FWM.  The algorithm is clearly recursive, and the strings are organized so that every string beginning with $i$ comes before every string beginning with $i+1$.  However, within these subsets of our list the ordering is not so simple.  When we consider a sublist of strings that all begin with the same prefix $w_1w_2 \ldots w_\ell$, we can determine whether the list is reversed or not by considering $\sum_{i=1}^\ell w_i$.  If the sum is odd then the list is reversed, and if the sum is even then it is not.  This immediately tells us the ordering of the $(\ell + 1)$st elements in this sublist.

\begin{lem}\label{GCL1}
    The first element of the list FWM$(m,n,k)$ is $0 \cdots 0 r (m-1) \cdots (m-1)$ where $k=q(m-1)+r$ for some $q,r \in \mathbb{Z}$ where $0 \leq r < m-1$.  Define $u_1 = \min \{m-1,k\}$ and $k-u_1 = q'(m-1)+r'$ for some $q',r' \in \mathbb{Z}$ with $0 \leq r' < m-1$.  Then the last element of the list is $$\mathbf{u} = \left\{
                                             \begin{array}{ll}
                                               u_1(m-1) \cdots (m-1)r'0 \cdots 0, & \hbox{if $u_1$ is even;} \\
                                               u_10\cdots 0 r'(m-1) \cdots (m-1), & \hbox{if $u_1$ is odd.}
                                             \end{array}
                                           \right.$$
\end{lem}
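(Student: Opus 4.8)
The plan is to prove the two halves of the statement by induction on the length $n$, handling the first element and the last element separately, and then feeding the first-element formula into the argument for the last element. Throughout I will use the structural description given just before the lemma: inside FWM$(m,n,k)$ the sublists are concatenated in order of increasing leading digit $i=0,1,\dots,\min(m-1,k)$, the block with leading digit $i$ is $i$ prepended to \textsf{Expo}$(L_i,i)$ with $L_i=\,$FWM$(m,n-1,k-i)$, and this block is empty precisely when $k-i<0$ or $k-i>(m-1)(n-1)$ (the two conditions making $L_i$ empty). The parity of $i$ decides, via \textsf{Expo}, whether $L_i$ is reversed.

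For the first element I would note that the first nonempty block has leading digit $i_0=\max\{0,\,k-(m-1)(n-1)\}$, and that the first element of FWM$(m,n,k)$ equals $i_0$ followed by the first element of FWM$(m,n-1,k-i_0)$: when $i_0>0$ the residual weight is exactly $(m-1)(n-1)$, so $L_{i_0}$ is the single all-$(m-1)$ word and the reversal in \textsf{Expo} is irrelevant. Iterating, the recursion emits a $0$ at current length $\ell$ precisely while $k\le(m-1)(\ell-1)$, then at the first length where weight can no longer be pushed rightward it emits the digit $r$ and a suffix of $(m-1)$'s. Unwinding this inequality with $k=q(m-1)+r$ produces exactly $0\cdots0\,r\,(m-1)\cdots(m-1)$ with $q$ trailing $(m-1)$'s; equivalently one checks by induction that the first element is the lexicographically least word of weight $k$, which has precisely this shape.

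For the last element I would split on the parity of $u_1=\min\{m-1,k\}$, the leading digit of the final block. If $u_1$ is odd, then \textsf{Expo}$(L_{u_1},u_1)=\,$\textsf{Rev}$(L_{u_1})$, so the last element of FWM$(m,n,k)$ is $u_1$ prepended to the \emph{first} element of FWM$(m,n-1,k-u_1)$; applying the already-proved first-element formula with $k-u_1=q'(m-1)+r'$ immediately gives $u_1\,0\cdots0\,r'(m-1)\cdots(m-1)$, as claimed. If $u_1$ is even, $L_{u_1}$ is not reversed, so the last element is $u_1$ prepended to the last element of FWM$(m,n-1,k-u_1)$, and I would induct. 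The crucial observation is that the even branch persists only while $u_1=m-1$ (a fixed even number), each step peeling off one $(m-1)$ and reducing the weight by $m-1$, until a residual weight $w<m-1$ is reached; there $\min\{m-1,w\}=w$ forces residual weight $0$ and hence the suffix $w\,0\cdots0$ regardless of the parity of $w$. Reassembling the peeled $(m-1)$'s with this residual yields $u_1(m-1)\cdots(m-1)\,r'0\cdots0$ with $k-u_1=q'(m-1)+r'$, i.e.\ the lexicographically greatest word of weight $k$.

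I expect the only delicate point to be the parity bookkeeping in the even case: one must verify that once $u_1=m-1$ is even the recursion stays in the even branch so that the induction hypothesis for the last element applies, and that the terminating step $w\,0\cdots0$ holds for either parity of $w$ because the remaining weight is then $0$ and \textsf{Expo} acts on a one-element list. Matching the exponent counts (the number of $(m-1)$'s and the value and position of the residual digit $r'$ before and after prepending a digit) is routine but must be tracked carefully so the closed forms line up. The example FWM$(3,4,5)$, whose first and last rows are $0122$ and $2210$, gives a convenient sanity check of both formulas.
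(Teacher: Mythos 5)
Your proposal is correct and takes essentially the same route as the paper: the first element is the lexicographic minimum (your recursion-unwinding just makes explicit what the paper asserts as clear), and the last element is proved by induction on $n$ with a parity split on $u_1$, where the odd case reduces via the reversal to the first-element formula and the even case persists because $u_1$ even forces $m-1$ even, terminating when the residual weight drops below $m-1$ (your $w\,0\cdots 0$ step, the paper's $u_1=k$ and $u_2=k-(m-1)$ subcases). Your iterative ``peeling'' in the even case is simply the paper's single application of the induction hypothesis unrolled, so there is no substantive difference.
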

\begin{proof}
    It is clear from the algorithm that any list always starts with the minimum string in lexicographic order.  Thus the list must start with $$0 \cdots 0 r (m-1) \cdots (m-1).$$

To find the last element of the list, we proceed by induction on $n$.  For the base case, we consider when $n=1$.  When $n=1$, clearly there is only one string:  $k$.  Note that this agrees with our definition of $\mathbf{u}$.

Before considering the two cases of $u_1$ either odd or even, we note that if $u_1=k$ then the string must be $u_1 0 \cdots 0$ and the lemma is satisfied.  So we may now assume the $u_1 = m-1$.

When $u_1$ is even, we are searching for the last element of the list $$u_1 \oplus \hbox{FWM}(m,n-1,k-u_1).$$ Note that $u_1$ even implies that $m-1$ is even.  In this case, we know that the second letter is $u_2 = \min \{m-1, k-(m-1)\}$.  As before, if $u_2 = k-(m-1)$, then the only string possible is $(m-1)(k-m+1)0 \cdots 0$, which meets the requirements.  So we assume that $u_2 = m-1$, which is even.  So now we know that the last element of the list must be $$u_1 \oplus (m-1) \cdots (m-1)r'' 0 \cdots 0 = (m-1) \cdots (m-1) r' 0 \cdots 0,$$ where $q'', r'' \in \mathbb{Z}$ with $0 \leq r'' < m-1$ so that $k-2(m-1) = q''(m-1)+r''$.

When $u_1$ is odd, we are searching for the last element of the list $$u_1 \oplus \hbox{\textsf{Rev}}(\hbox{FWM}(m,n-1,k-u_1)),$$ which is the same as searching for the first element of the list $$u_1 \oplus \hbox{FWM}(m,n-1,k-u_1).$$  By the first part of the claim this is given by $u_1 0 \cdots 0 r (m-1) \cdots (m-1)$ where $k-u_1 = q(m-1)+r$ for $0 \leq r < m-1$.
\end{proof}

Using this lemma, we are able to deduce the following pair of corollaries.

\begin{cor}\label{GCC1}
    For all $m,n,k$, the first element of FWM$(m,n,k)$ and the first element of FWM$(m,n,k-1)$ differ in exactly one position.
\end{cor}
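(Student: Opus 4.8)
The plan is to read off the first element of each list directly from Lemma~\ref{GCL1} and compare. By the first part of that lemma, the first element of FWM$(m,n,k)$ is $0\cdots 0\, r\, (m-1)\cdots(m-1)$, where $k = q(m-1)+r$ with $0\le r < m-1$. I would write the analogous expression for the first element of FWM$(m,n,k-1)$, namely $0\cdots 0\, \bar r\,(m-1)\cdots(m-1)$ where $k-1 = \bar q(m-1)+\bar r$ with $0\le \bar r < m-1$, and then show that these two strings agree in all but one coordinate.

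The natural approach is to track how the division-with-remainder data $(q,r)$ changes as $k$ decreases by $1$. First I would handle the generic case $r \ge 1$: here $k-1 = q(m-1)+(r-1)$ still has quotient $\bar q = q$ and remainder $\bar r = r-1$, so the number of trailing $(m-1)$'s is unchanged and the two first elements are identical except that the single ``remainder'' coordinate drops from $r$ to $r-1$. That is a change in exactly one position, as desired. The second case is the boundary $r = 0$, where $k = q(m-1)$; then $k-1 = (q-1)(m-1) + (m-2)$, so $\bar q = q-1$ and $\bar r = m-2$. I would argue that passing from $k$ to $k-1$ converts the first of the $q$ trailing $(m-1)$'s into the new remainder digit $m-2$, while the block of leading zeros and the remaining $(m-1)$'s stay put; writing out both strings coordinatewise shows they differ only in that one position.

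The main obstacle, and the step deserving the most care, is the bookkeeping in the boundary case $r=0$: I must confirm that the length of the leading zero-block is genuinely the same for $k$ and $k-1$ (so that the single changed coordinate is the same index in both strings) rather than merely that the strings have the same multiset of entries. Concretely, I would verify that the leading-zeros count, which equals $n$ minus one minus the number of trailing $(m-1)$'s, shifts consistently: the trailing-$(m-1)$ count drops from $q$ to $q-1$ and the changed coordinate sits exactly at the position vacated. I should also dispose of the two degenerate situations separately, namely $k=0$ (where FWM$(m,n,k-1)$ is empty and the statement is vacuous) and the saturated case $r'$-type endpoints where $k$ is a multiple of $m-1$ equal to $(m-1)n$, so that there are no leading zeros; in each I would check the coordinatewise comparison still yields a single-position difference. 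Once these cases are exhausted, the corollary follows immediately from Lemma~\ref{GCL1}.
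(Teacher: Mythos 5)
Your proposal is correct and takes essentially the same approach as the paper: both read the first elements off Lemma~\ref{GCL1} and compare the two strings coordinatewise. The paper merely avoids your case split by working upward from $k-1$ to $k$ --- writing $k-1 = q(m-1)+r$ canonically, it observes that the first element of FWM$(m,n,k)$ is $0\cdots 0\,(r+1)\,(m-1)\cdots(m-1)$ with $1 \leq r+1 \leq m-1$, which absorbs your boundary case $r=0$ (there the new digit $r+1=m-1$ simply coincides with the canonical form having remainder $0$ and one more trailing $m-1$), so the extra bookkeeping you flag is handled implicitly.
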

\begin{proof}
    By Lemma \ref{GCL1}, we know that the first element of FWM$(m,n,k-1)$ is $$0 \cdots 0 r(m-1) \cdots (m-1)$$ with $0 \leq r < m-1$.  Then the first element of FWM$(m,n,k)$ must be $$0 \cdots 0 (r+1) (m-1) \cdots (m-1)$$ where $1 \leq r+1 < m$.  These clearly only differ in one position.
\end{proof}

\begin{cor}\label{GCC2}
    For all $m,n,k$, the last element of FWM$(m,n,k)$ and the last element of FWM$(m,n,k-1)$ differ in exactly one position.
\end{cor}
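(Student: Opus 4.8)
The plan is to obtain closed forms for both last elements directly from Lemma~\ref{GCL1} and then compare them coordinate by coordinate, organizing everything around the value $u_1=\min\{m-1,k\}$. The first step is to split on the size of $k$. When $k\le m-1$ the argument is immediate: here $u_1=k$ and $k-u_1=0$, so Lemma~\ref{GCL1} (its degenerate case $u_1=k$) gives last element $k0\cdots0$ for FWM$(m,n,k)$; likewise the last element of FWM$(m,n,k-1)$ is $(k-1)0\cdots0$, and these differ only in the first coordinate.

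The substantive case is $k\ge m$. Then $k-1\ge m-1$ as well, so both lists have $u_1=m-1$; in particular the two last elements share the same leading digit $m-1$ and, crucially, the same parity of $u_1$, so Lemma~\ref{GCL1} places both strings in the same template (the even template $(m-1)\cdots(m-1)\,r'\,0\cdots0$ when $m-1$ is even, and the odd template $0\cdots0\,r'\,(m-1)\cdots(m-1)$ when $m-1$ is odd). I would then write $k-(m-1)=q'(m-1)+r'$ and observe that $(k-1)-(m-1)=q'(m-1)+(r'-1)$, so that passing from weight $k$ to weight $k-1$ only affects the remainder digit.

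The final step is the position-by-position comparison, with a sub-split on the remainder. If $r'\ge 1$ the two $(m-1)$-blocks and $0$-blocks have identical lengths, and the strings agree everywhere except at the single remainder coordinate, which changes from $r'$ to $r'-1$. The only delicate point --- and the one I expect to be the main obstacle --- is the borrow case $r'=0$, where the quotient drops to $q'-1$ and the remainder wraps to $m-2$; here one $(m-1)$-block loses a digit while a neighboring coordinate changes from $m-1$ to $m-2$, and one must line up the two templates carefully to verify that, after the $0$-blocks and the shortened $(m-1)$-blocks are matched, exactly one coordinate differs. (Note that $q'\ge 1$ in this subcase, since $k\ge m$ forces $k-(m-1)\ge 1$, so the shortening is legitimate.) This is the same remainder-and-carry accounting already carried out in the proof of Lemma~\ref{GCL1}, and the even and odd templates are handled symmetrically, so once the borrow case is dispatched the corollary follows.
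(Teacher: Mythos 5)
Your proposal is correct, but it takes a genuinely different route from the paper. The paper proves the corollary by induction on $n$: after noting via Lemma~\ref{GCL1} that for $k > m-1$ both last elements begin with the same letter $u_1 = \min\{m-1,k\} = m-1$, it strips that letter and reduces to length $n-1$, invoking the induction hypothesis when $u_1$ is even (the suffixes are again last elements of shorter lists) and invoking Corollary~\ref{GCC1} when $u_1$ is odd (the reversal turns last elements into first elements). You bypass induction entirely: you extract both last elements in closed form from Lemma~\ref{GCL1}, observe that for $k \geq m$ both lists share $u_1 = m-1$ and hence the same parity template, and compare digit by digit, splitting on the remainder. Your one deferred step, the borrow case $r'=0$, in fact closes more cleanly than you anticipate: writing the even template as $(m-1)^{q'+1}\,r'\,0^{n-q'-2}$, the weight-$k$ string with $r'=0$ is $(m-1)^{q'+1}0^{n-q'-1}$ (the remainder coordinate merges into the zero block), while the weight-$(k-1)$ string is $(m-1)^{q'}(m-2)\,0^{n-q'-1}$; these differ only in coordinate $q'+1$, and the odd template is handled symmetrically, the single change occurring at the boundary coordinate between the zero block and the $(m-1)$ block. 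Your check that $q' \geq 1$ there (from $k \geq m$) is exactly the sanity condition needed. As for what each approach buys: the paper's induction is shorter, mirrors the recursive structure of FWM, and needs Corollary~\ref{GCC1} precisely because \textsf{Rev} swaps first and last elements; your computation is non-inductive, self-contained modulo the lemma, produces the explicit strings, and makes transparent \emph{why} exactly one digit changes --- decrementing $k$ decrements the representation $q'(m-1)+r'$ by one, which perturbs exactly one template coordinate, with any carry absorbed at a block boundary.
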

\begin{proof}
    We proceed by induction on $n$.  If $n=1$, then the two strings must differ in exactly one position.  We now assume that $n>1$.  Define $u_1 = \min \{m-1,k\}$.  By Lemma \ref{GCL1}, the last element of FWM$(m,n,k)$ is $$\mathbf{u} = \left\{
                                             \begin{array}{ll}
                                               u_1(m-1) \cdots (m-1)r'0 \cdots 0, & \hbox{if $u_1$ is even;} \\
                                               u_10\cdots 0 r'(m-1) \cdots (m-1), & \hbox{if $u_1$ is odd.}
                                             \end{array}
                                           \right.$$
As before, if $u_1 = k$ then this string is $\mathbf{u}=k0 \cdots 0$ and the last element of FWM$(m,n,k-1)$ is $(k-1)0 \cdots 0$, which clearly only differs in one position.  If $u_1 = (m-1) \neq k$, then we consider the last element of FWM$(m,n,k-1)$, which we will call $\mathbf{v}$.  In this case we must have $u_1 = v_1$, and so we consider the two substrings $u_2 u_3 \ldots u_n$ and $v_2v_3 \ldots v_n$.  If $u_1$ is even, these are the last elements of the lists FWM$(m,n-1,k-u_1)$ and FWM$(m,n-1,k-u_1-1)$, respectively.  By the induction hypothesis these must differ in exactly one position, and we are done.  If $u_1$ is odd, then our two substrings are the first elements of the lists FWM$(m,n-1,k-u_1)$ and FWM$(m,n-1,k-u_1-1)$, which by Corollary \ref{GCC1} differ in exactly one position.
\end{proof}

Using the lemma and corollaries, we are able to show that Algorithm FWM is correct in the following theorem.

\begin{thm}
    There is a Gray code for $\mathcal{B}_k(m,n)$, the set of $m$-ary words of length $n$ with weight $k$, in which adjacent words differ in exactly two positions.
\end{thm}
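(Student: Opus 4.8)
The plan is to prove correctness by induction on $n$, verifying two separate claims: first, that $\hbox{FWM}(m,n,k)$ lists \emph{every} element of $\mathcal{B}_k(m,n)$ exactly once, and second, that consecutive words in the list differ in exactly two positions. The recursive structure of the algorithm makes the first claim nearly immediate: for each admissible first letter $i \in \{0,1,\dots,\min(m-1,k)\}$, the sublist $i \oplus \hbox{\textsf{Expo}}(\hbox{FWM}(m,n-1,k-i),i)$ enumerates precisely those weight-$k$ words of length $n$ beginning with $i$, and since $\hbox{\textsf{Rev}}$ and $\hbox{\textsf{Expo}}$ only permute a list without altering its contents, the inductive hypothesis gives each such block exactly once with no repeats across blocks. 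So the enumeration/bijection part is routine.

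The substantive work is the two-position difference property, and here I would split the analysis into \textbf{within-block} transitions and \textbf{between-block} transitions. Within a single block with fixed first letter $i$, every listed word has the \emph{same} first coordinate $i$, so any change must occur among the last $n-1$ coordinates; by induction $\hbox{FWM}(m,n-1,k-i)$ has the two-position property, and since reversing a list preserves all adjacency relations (it only reverses their order), $\hbox{\textsf{Expo}}(\hbox{FWM}(m,n-1,k-i),i)$ inherits exactly the same consecutive-difference property. Thus every within-block transition differs in exactly two positions by the inductive hypothesis.

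The main obstacle — and the reason Lemma~\ref{GCL1} and Corollaries~\ref{GCC1}, \ref{GCC2} were proved — is the between-block transition from the last word of the block for first letter $i$ to the first word of the block for first letter $i+1$. Here the first coordinate changes from $i$ to $i+1$ (one position), so I must show the remaining $n-1$ coordinates differ in \emph{exactly one} position, giving two total. The key is that the reflection via \textsf{Expo} is arranged precisely so that the boundary word of one block abuts a matching boundary word of the next: when $i$ is even, block $i$ ends with the last word of $\hbox{FWM}(m,n-1,k-i)$, and when $i$ is odd it ends with its first word (because the list was reversed); the parity of $i+1$ then dictates whether block $i+1$ begins with the first or last word of $\hbox{FWM}(m,n-1,k-i-1)$. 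I would check the two parity cases. In each case the two relevant length-$(n-1)$ suffixes are either both ``first elements'' of $\hbox{FWM}(m,n-1,k-i)$ and $\hbox{FWM}(m,n-1,k-i-1)$, so they differ in one position by Corollary~\ref{GCC1}, or both ``last elements,'' so they differ in one position by Corollary~\ref{GCC2}. Combined with the guaranteed change in the first coordinate, every between-block transition differs in exactly two positions, completing the induction. The base case $n=1$ is trivial, since $\mathcal{B}_k(m,1)$ is either empty or a single string and there are no transitions to check.
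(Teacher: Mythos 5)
Your proposal is correct and follows essentially the same route as the paper's own proof: induction on $n$, within-block transitions handled by the inductive hypothesis (with \textsf{Rev} preserving adjacencies), and the two parity cases at block boundaries resolved by Corollary~\ref{GCC1} (both boundary words are first elements, when $i$ is odd) and Corollary~\ref{GCC2} (both are last elements, when $i$ is even), exactly as in the paper. The only difference is that you explicitly verify that every word of $\mathcal{B}_k(m,n)$ appears exactly once, a point the paper leaves implicit; that is a small but harmless strengthening, not a different approach.
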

\begin{proof}
    We will show by induction that Algorithm FWM produces the desired Gray code.  For the base cases, when $n=1$ the lists are easily constructed.  If $m-1 \geq k$, then we get the list $[k]$, otherwise we have an empty list.

For $n>1$, suppose we want to construct the desired Gray code for $\mathcal{B}_k(m,n)$.  We will show that FWM$(m,n,k)$ is correct.  By the induction hypothesis, for each $i$ from $0$ to $\min \{m-1,k\}$, our sublist $M$ has adjacent elements differing in exactly two positions.  All that remains is to check that this minimal change property is maintained as $i$ increases.

First, when $i$ increases from an odd to an even number, we have the following transition.
\begin{eqnarray*}
    i & \oplus & \hbox{\textsf{Rev}}(\hbox{FWM}(m,n-1,k-i)) \\
    i+1 & \oplus & \hbox{FWM}(m,n-1,k-i-1)
\end{eqnarray*}
 By Corollary \ref{GCC1}, the adjacent elements of these two sublists differ in exactly one position, which together with the leftmost position gives exactly two positions.

When $i$ increases from an even number to an odd number, we have the following transition.
\begin{eqnarray*}
    i & \oplus & \hbox{FWM}(m,n-1,k-i) \\
    i+1 & \oplus & \hbox{\textsf{Rev}}(\hbox{FWM}(m,n-1,k-i-1))
\end{eqnarray*}
Clearly the adjacent elements of these two sublists differ in the first coordinate, so we must check that they only differ in one other position at their meeting point - that is, that the last element of FWM$(m,n-1,k-i-1)$ differs from the last element of FWM$(m,n-1,k-i)$ in at most one position.  To prove this, we use Corollary \ref{GCC2}.
\end{proof}

\section{Overlap Cycles}\label{OCycles}

For a set $S$ of strings of length $n$ and an integer $s$ with $1 \leq s \leq n-1$, an \textbf{$s$-overlap cycle}, or \textbf{$s$-ocycle}, is an ordering of $S$ such that string $x_1x_2 \ldots x_n$ is followed by string $y_1y_2 \ldots y_n$ only if $x_{n-s+i} = y_i$ for $i \in [s]$.  Note that this definition requires that each element of $S$ appear exactly once, and the ordering is cyclic, i.e. the last element of $S$ and the first element of $S$ must overlap in the specified manner.  In other words, an $s$-overlap cycle is a cyclic Gray code that allows string $x$ to follow string $y$ if and only if the last $s$ letters of $y$ (the $s$-suffix) are the same as the first $s$-letters of $x$ (the $s$-prefix).

Given a set $S$ of strings, the standard approach for proving the existence of such an ordering is to use a \textbf{transition digraph}.  The vertices of the transition digraph represent strings of length $s$ (the \textbf{overlaps}), while the edges represent strings of length $n$ (the objects).  For a given string $x=x_1x_2 \ldots x_n$, the \textbf{$s$-prefix} of $x$ is $x^{s-}=x_1x_2 \ldots x_s$ and the \textbf{$s$-suffix} is $x^{s+}=x_{n-s+1}x_{n-s+2} \ldots x_n$.  Let $D$ be the digraph defined by setting $$V=\{w \mid w = x^{s-} \hbox{ or } w = x^{s+} \hbox{ for some } x \in S\}$$ and $$E = \{(w,v) \mid w = x^{s-} \hbox{ and } v = x^{s+} \hbox{ for some } x \in S\}.$$  Then an Euler tour (closed walk that contains every edge exactly once) in $D$ corresponds to an $s$-ocycle for $S$.  To prove the existence of an Euler tour, we use the following well-known result from graph theory.

\begin{thm}\label{euler}
	\emph{(\cite{West}, p. 60)}  A directed graph $G$ is eulerian if and only if it is both balanced and weakly connected.
\end{thm}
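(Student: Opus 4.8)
The plan is to prove the two directions separately, treating the forward implication as a short observation and devoting the bulk of the argument to the reverse. For the forward direction, I would assume $G$ admits an Euler tour $T$, a closed walk traversing every edge exactly once. Balance is then immediate: each visit of $T$ to a vertex $v$ enters along one edge and departs along another, and since $T$ is closed this pairs the incoming and outgoing edges at every vertex, giving $\deg^-(v)=\deg^+(v)$ throughout. Weak connectivity is equally direct, since consecutive edges of $T$ share a vertex and $T$ meets every edge, so all non-isolated vertices lie in a single component of the underlying undirected graph.

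For the reverse direction I would assume $G$ is balanced and weakly connected and build an Euler tour by a maximal-circuit (Hierholzer-style) argument. The first ingredient is the observation that in a balanced digraph any trail that starts at a vertex $v$ and follows previously unused edges can only get stuck back at $v$: whenever the trail arrives at a vertex $w \neq v$ it has so far used one more edge into $w$ than out of $w$, so balance guarantees an unused outgoing edge along which to continue. Hence every trail extended maximally from $v$ closes up into a circuit.

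Next I would select a circuit $C$ in $G$ using as many edges as possible and claim it exhausts the edge set. If not, deleting the edges of $C$ leaves a digraph $G'$ that is again balanced, because a circuit removes equally many incoming and outgoing edges at each vertex. Weak connectivity of $G$ together with the existence of a remaining edge then forces some vertex $v$ on $C$ to be incident in $G'$ to an unused edge; starting at $v$ and walking through unused edges produces, by the preceding observation, a circuit $C'$ returning to $v$. Splicing $C'$ into $C$ at $v$ yields a closed trail with strictly more edges than $C$, contradicting maximality, so $C$ must be an Euler tour.

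The hard part is the reverse direction, and specifically the two facts underpinning the splice: that balance survives the deletion of a circuit, and that connectivity forces every leftover edge to be reachable from a vertex already on $C$. The former is a local counting check vertex by vertex, while the latter is exactly where weak connectivity is indispensable --- without it the unused edges could form a component disjoint from $C$ that no amount of splicing could absorb.
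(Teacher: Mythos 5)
Your argument is correct and is the standard Hierholzer-style maximal-circuit proof; the paper itself offers no proof of this statement, citing it directly from West (p.~60), where essentially this same argument appears, so there is nothing to diverge from. The only detail worth making explicit is that when a vertex $v$ of $C$ is incident to a leftover edge, balance of the residual digraph $G'$ guarantees that $v$ has an unused \emph{outgoing} edge there, so the spliced circuit can legitimately start at $v$.
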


\subsection{Existence of Overlap Cycles}

In the paper introducing overlap cycles, Godbole, et al, prove the following general theorem.

\begin{thm}\label{GodboleGen}
    \emph{\cite{Godbole}}  For all $n,m,s \in \mathbb{Z}^+$, $1 \leq s \leq n-1$, there exists an $s$-ocycle for $\mathcal{B}(m,n)$.
\end{thm}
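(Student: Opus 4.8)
The plan is to realize the desired $s$-ocycle as an Euler tour of the transition digraph $D$ described just above the statement, and then to invoke Theorem \ref{euler}. First I would identify $D$ explicitly for the set $S = \mathcal{B}(m,n)$. Since every $s$-string arises as the $s$-prefix (and also as the $s$-suffix) of some length-$n$ $m$-ary word, the vertex set is all of $\mathcal{B}(m,s)$, giving $m^s$ vertices, and the edges are indexed by the $m^n$ words of $\mathcal{B}(m,n)$: the word $x = x_1 \cdots x_n$ contributes the edge $(x^{s-}, x^{s+})$.

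Next I would check that $D$ is balanced. Fix a vertex $w = w_1 \cdots w_s$. Its out-edges are exactly the words beginning with $w$, that is, words $w_1 \cdots w_s x_{s+1} \cdots x_n$ with the final $n-s$ letters free, so there are $m^{n-s}$ of them. Dually, its in-edges are the words ending in $w$, of which there are again $m^{n-s}$. Hence at every vertex the in-degree equals the out-degree equals $m^{n-s}$, so $D$ is balanced.

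The remaining and most delicate step is weak connectivity; in fact I would establish the stronger property that $D$ is strongly connected. The key observation is that traversing a single edge out of a vertex $w$ produces a new $s$-string obtained by deleting the first $n-s$ letters of $w$ and appending $n-s$ freely chosen letters: if $x = w_1 \cdots w_s x_{s+1} \cdots x_n$, then
\[
x^{s+} = w_{n-s+1} \cdots w_s \, x_{s+1} \cdots x_n ,
\]
where the trailing letters $x_{s+1}, \ldots, x_n$ are unconstrained (when $n \ge 2s$ the entire suffix is free, which is only easier). Because $n - s \ge 1$, after $\lceil s/(n-s) \rceil$ such steps every original letter of $w$ has been flushed out of the window, and the window content equals the last $s$ letters of the freely chosen appended stream. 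Thus from any vertex $w$ I can reach any prescribed vertex $v$ by choosing the appended letters so that the stream ends in $v$. This yields a directed path between every ordered pair of vertices, so $D$ is strongly connected, hence weakly connected.

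Having verified that $D$ is both balanced and weakly connected, Theorem \ref{euler} supplies an Euler tour of $D$, and reading off the edge labels in the order they are traversed gives a cyclic listing of $\mathcal{B}(m,n)$ in which consecutive words overlap in $s$ positions, as required. I expect the only genuine obstacle to be the connectivity argument in the regime $s > n/2$, where the $s$-prefix and $s$-suffix of a word share letters and one cannot fill the window in a single step; the sliding-window description above is what resolves this case cleanly.
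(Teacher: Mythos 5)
Your proposal is correct and follows exactly the transition-digraph framework the paper sets up immediately before this statement (vertices $\mathcal{B}(m,s)$, edges $\mathcal{B}(m,n)$, balance via counting $m^{n-s}$ extensions, strong connectivity via the sliding-window argument, then Theorem \ref{euler}); note that the paper itself gives no proof here, since the result is quoted from Godbole et al.\ \cite{Godbole}, but your argument is the standard one and matches the method the paper uses for its own overlap-cycle theorems. Your handling of the $s > n/2$ regime, flushing the window in $\lceil s/(n-s) \rceil$ steps, is precisely the point that needs care, and you resolve it correctly.
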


If instead of considering all words over an alphabet of size $M$ we consider words over a fixed multiset, we have the following theorem.

\begin{thm}\label{perms}
    \emph{\cite{Horan}}  Let $n,s \in \mathbb{Z}^+$ with $1 \leq s \leq n-1$ and let $M$ be a multiset of size $n$.  There exists an $s$-ocycle on permutations of $M$ if and only if $n-s > \hbox{gcd}(n,s)$.
\end{thm}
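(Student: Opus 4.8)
The plan is to prove this iff statement about $s$-ocycles on permutations of a multiset $M$ of size $n$ using the transition digraph framework just described. By Theorem \ref{euler}, it suffices to show that the relevant transition digraph $D$ is both balanced (in-degree equals out-degree at every vertex) and weakly connected, and that these two conditions together are equivalent to the stated arithmetic condition $n - s > \gcd(n,s)$. So the real work splits into two directions: proving that $n-s > \gcd(n,s)$ is necessary, and proving it is sufficient.

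\medskip

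\noindent\textbf{Setting up the digraph.} First I would fix the digraph $D$ whose vertices are the length-$s$ overlaps that actually occur as a prefix or suffix of some permutation of $M$, and whose edges are the permutations themselves. The key preliminary observation is that $D$ is automatically balanced: for any vertex (overlap) $w$ of length $s$, extending $w$ on the right to a full permutation of $M$ uses the same ``remaining'' multiset as extending $w$ on the left, so the number of permutations having $w$ as an $s$-prefix equals the number having $w$ as an $s$-suffix. I would make this counting argument carefully, since it is the cleanest half of the ``balanced'' requirement and reduces the whole problem to \emph{weak connectivity}. Thus the theorem becomes: $D$ is weakly connected if and only if $n - s > \gcd(n,s)$.

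\medskip

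\noindent\textbf{Necessity.} For the forward direction I would show that if $n - s \leq \gcd(n,s)$ then $D$ fails to be connected (or the ocycle otherwise cannot exist). Writing $d = \gcd(n,s)$, the condition $n-s \le d$ together with $d \mid n$ and $d \mid s$ forces $n - s$ to be a small multiple of $d$; in the extremal cases $n = s + d$ or similar, I expect the overlap structure to decompose the permutations into separate congruence classes that no sequence of $s$-overlap transitions can connect, because each transition shifts the window by $n - s$ positions and $\gcd$-periodicity traps the letters into fixed residue classes modulo $d$. The cleanest route is probably to exhibit an invariant preserved by every overlap step when $n - s \le d$, and show it takes more than one value, so $D$ is disconnected.

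\medskip

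\noindent\textbf{Sufficiency.} This is the hard part. Assuming $n - s > \gcd(n,s)$, I must show $D$ is weakly connected: any two overlaps can be joined by a walk. The natural strategy is to show that a single ``rotation'' move — reading off a permutation and shifting the window by $n-s$ letters — together with the freedom to permute the letters that fall outside the overlap, generates enough moves to reach any target overlap from any source. Since $\gcd(n-s, \text{something related to } n)$ governs which positions can be cyclically reached, the inequality $n-s > \gcd(n,s)$ should be exactly what guarantees the orbit of rotations is large enough to break the periodicity obstruction found in the necessity direction. Concretely, I would argue that from a fixed overlap I can transpose any two symbols in the non-overlap region via a short sequence of edges, and then use the strict inequality to ``carry'' an arbitrary symbol through the overlap window, thereby realizing every admissible overlap. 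The main obstacle I anticipate is handling multisets with repeated symbols uniformly with those having distinct symbols: repeated letters collapse some overlaps and edges, so the connectivity argument must be phrased in terms of reachable overlap-classes rather than literal permutations, and verifying that the strict inequality still suffices in the most degenerate multisets (e.g. nearly all symbols equal) will require the most care.
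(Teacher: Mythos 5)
A preliminary remark: the paper does not prove Theorem \ref{perms} at all --- it is quoted without proof from the cited reference \cite{Horan} --- so there is no in-paper argument to compare yours against, and your attempt must be judged on its own. On its own terms, your reduction to Theorem \ref{euler} and your balance argument are correct and complete: an $s$-string $w$ whose multiset is contained in $M$ is the $s$-prefix of exactly as many permutations of $M$ as it is the $s$-suffix (both counts are the number of arrangements of $M$ minus the letters of $w$), so the transition digraph is balanced and everything indeed rests on connectivity.

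The gaps are in both remaining directions. For sufficiency --- the actual content of the theorem --- you give only a plan and never say how the hypothesis $n-s>\gcd(n,s)$ enters. The mechanism is the block decomposition you half-invoke on the necessity side: since $d=\gcd(n,s)$ divides both $n$ and $s$ (hence also $n-s$), repeated shifts by $n-s$ partition the $n$ positions into $n/d$ blocks of length $d$, and one overlap step lets you rearrange only the $n-s$ letters outside the window; when $n-s>d$ the free window spans more than one block, which is exactly what permits swapping letters \emph{between} blocks and sorting toward a canonical permutation. Your proposed ``transpose two symbols, then carry a symbol through the window'' is plausible, but it \emph{is} the theorem, and you supply no construction or induction for it. For necessity, note first that $d\mid(n-s)$ forces $n-s\le d$ to mean exactly $n-s=d$ (not ``a small multiple of $d$ \ldots or similar''); the invariant is then the cyclic sequence of block multisets, which every step preserves up to rotation --- this matches the discussion the present paper borrows from \cite{Horan} in its own fixed-weight converse. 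But to conclude disconnection you must exhibit two permutations of $M$ whose block sequences are \emph{not} rotations of each other, and for degenerate multisets no such pair exists: with $M=\{a^n\}$, or already with $M=\{a,b\}$, $n=2$, $s=1$ (where $n-s=\gcd(n,s)=1$), the cyclic listing $[ab,\,ba]$ is a perfectly valid $1$-ocycle. So your invariant cannot ``take more than one value'' for every $M$, and the necessity direction as you sketch it fails on precisely the degenerate multisets you flagged and postponed; any correct proof (indeed any fully correct statement of the theorem) must treat those cases explicitly --- note that this paper itself applies Theorem \ref{perms} to sets such as $\{0^n\}$ while asserting that ocycles always exist there.
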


In order to find some middle ground between Theorems \ref{GodboleGen} and \ref{perms}, we consider fixed-weight $m$-ary words.  That is, words over the alphabet $\{0, 1, \ldots, m-1\}$ such that the sum of the letters is some predetermined constant.  For fixed-weight $m$-ary words, we note that the following sets contain only one string or a set of strings with the same fixed multiset, hence by Theorem \ref{perms} there always exists an $s$-ocycle for:
\begin{itemize}
    \item  $\mathcal{B}_0(m,n) = \{0^n\}$,
    \item  $\mathcal{B}_1(m,n) = \{0^t10^{n-t-1} \mid t \in [n]\}$,
    \item  $\mathcal{B}_2(2,2) = \{11\}$,
    \item  $\mathcal{B}_{(m-1)n}(m,n) = \{(m-1)^n\}$, and
    \item  $\mathcal{B}_{(m-1)n-1}(m,n) = \{(m-1)^t(m-2)(m-1)^{n-t-1} \mid t \in [n]\}$.
\end{itemize}

\begin{thm}
    Fix $n,m,k,s \in \mathbb{Z}^+$ such that:
    \begin{itemize}
        \item  $1 \leq s \leq n-1$, and
        \item  $1 < k < (m-1)n-1$.
    \end{itemize}
    Then there is an $s$-ocycle for $\mathcal{B}_k(m,n)$ if and only if $n-s > \hbox{gcd}(n,s)$.
\end{thm}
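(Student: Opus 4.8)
The plan is to use the transition digraph $D$ described above, whose vertices are the length-$s$ strings occurring as an $s$-prefix or $s$-suffix of some word in $\mathcal{B}_k(m,n)$ and whose edges are the words themselves, and then to invoke Theorem~\ref{euler}: an $s$-ocycle for $\mathcal{B}_k(m,n)$ exists iff $D$ is balanced and weakly connected. The first and easy step is to check that $D$ is \emph{always} balanced. For a vertex $w$ of weight $j$, its out-degree counts the words $wz$ of weight $k$, i.e.\ the length-$(n-s)$ tails $z$ of weight $k-j$, while its in-degree counts the length-$(n-s)$ heads of weight $k-j$; both numbers are equal, so in-degree $=$ out-degree at every vertex. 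Hence the whole statement reduces to deciding when $D$ is weakly connected. Since $\gcd(n,s)\mid(n-s)$ and $n-s\ge 1$, the inequality $n-s>\gcd(n,s)$ fails in exactly one situation, namely $n-s=\gcd(n,s)$, so it suffices to rule out connectivity there and to establish it otherwise.

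For necessity I would treat the failing case $n-s=d:=\gcd(n,s)$, writing $n=(a+1)d$ and $s=ad$ with $a\ge 1$, so that every word of $\mathcal{B}_k(m,n)$ splits into $a+1$ consecutive blocks of length $d$ and the $s$-overlap is exactly $a$ whole blocks. To each word I attach its \emph{block-weight sequence}, the composition $(c_0,\dots,c_a)$ of $k$ recording the weights of its blocks. A one-line weight computation shows that if $x$ is followed by $y$ in any $s$-ocycle, then equality of weights $k$ for both words forces the new (last) block of $y$ to have the same weight as the first block of $x$; thus the block-weight sequence of $y$ is the left cyclic rotation of that of $x$. Consequently the cyclic-equivalence class (necklace) of the block-weight sequence is invariant along the entire ocycle, so any single ocycle can only list words lying in one such class. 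I would then finish by verifying that, under the strict bounds $1<k<(m-1)n-1$, the set $\mathcal{B}_k(m,n)$ realizes at least two distinct block-weight necklaces (the boundary weights excluded by the hypothesis are precisely those producing a unique necklace), contradicting the existence of an ocycle.

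For sufficiency I assume $n-s>\gcd(n,s)$, so in particular $n-s\ge 2$. I would decompose $D$ as the edge-disjoint union $\bigcup_M D_M$ over all multisets $M$ of $n$ symbols from $\{0,\dots,m-1\}$ with total weight $k$, where $D_M$ is the transition digraph of the permutations of $M$. Because the present hypothesis is \emph{identical} to the one in Theorem~\ref{perms}, every $M$ with more than one permutation admits an $s$-ocycle, so its $D_M$ is Eulerian and hence connected, while single-permutation multisets contribute only harmless loops. It remains to glue the pieces: I would show that the graph on admissible multisets, with $M\sim M'$ when $M'$ arises from $M$ by lowering one part by one and raising another by one, is connected, and that any two adjacent $M,M'$ share a vertex of $D$. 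Indeed $M$ and $M'$ agree in $n-2\ge s$ of their symbols, so any length-$s$ string built from those common symbols can be completed to a permutation of either multiset (placing the two differing symbols in the length-$(n-s)$ tail), and therefore lies in both $D_M$ and $D_{M'}$. This puts all the $D_M$ into one weak component, so $D$ is connected; being also balanced, it is Eulerian by Theorem~\ref{euler}, yielding the desired $s$-ocycle.

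The main obstacle is the gluing step in the sufficiency direction. Theorem~\ref{perms} supplies connectivity only \emph{within} each multiset class, and the real content is to show that elementary weight-moves connect all admissible multisets and that adjacent classes genuinely share a usable overlap vertex; this is exactly where $n-s\ge 2$ enters, mirroring the way connectivity breaks when $n-s=\gcd(n,s)$. A secondary point needing care is the combinatorial claim used for necessity, that $1<k<(m-1)n-1$ forces at least two distinct block-weight necklaces; the weights excluded here should coincide precisely with the degenerate families ($\mathcal{B}_0$, $\mathcal{B}_1$, $\mathcal{B}_{(m-1)n-1}$, $\mathcal{B}_{(m-1)n}$, and $\mathcal{B}_2(2,2)$) already dispatched before the theorem.
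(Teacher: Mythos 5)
Your overall architecture coincides with the paper's: balance via the head/tail bijection, sufficiency by combining Theorem~\ref{perms} (connectivity within each multiset class) with unit weight transfers between multisets carried out in the length-$(n-s)$ tail, and necessity via the invariance of block-weight sequences under $d$-block rotation when $n-s=\gcd(n,s)$. Your shared-vertex gluing step is in fact the same mechanism as the paper's move from $X'$ to $X''$ (both exploit $n-s\geq 2$ to park the two differing symbols in the tail). The sufficiency half is essentially complete except that you only assert, rather than prove, connectivity of the multiset move graph; this is a routine potential argument (compare the sorted multiset with the lexicographically minimal string $0\cdots 0\, r\, (m-1)\cdots(m-1)$: at the leftmost disagreement $x_i>v_i$, equal weights give some $j$ with $x_j<v_j$, and the transfer $x_i\mapsto x_i-1$, $x_j\mapsto x_j+1$ stays inside the alphabet and strictly decreases the distance to the target), and it is exactly the argument the paper folds into its path construction, so I count this as a minor omission.

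The genuine gap is in the necessity direction. Your rotation-invariance observation is correct and is a clean way to phrase the obstruction, but you defer the claim that $1<k<(m-1)n-1$ forces at least two distinct block-weight necklaces, and your parenthetical assertion that the excluded boundary weights are ``precisely those producing a unique necklace'' is the entire content of this direction, not a secondary verification. The paper must exhibit two explicit strings, $A=0^{n-q-1}r(m-1)^q$ and $B=10^{n-q-2}r(m-1)^{q-1}(m-2)$ where $k=q(m-1)+r$, and then run a four-case analysis (according to whether $n-q-1\geq d$ and whether $q\geq d$) to show their block-weight sequences are never rotations of each other. The hypotheses enter in genuinely nontrivial ways there: one potential rotation-coincidence survives all the way down to the configuration $m=2$, $d=1$, $k=n-1$ and is killed only by $k<(m-1)n-1$; another ($A=0\cdots 0k$ versus $B=10\cdots 0(k-1)$) is killed only by $k>1$; and the fourth case requires observing that $d\leq n/2$ makes it vacuous. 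So the correspondence between the excluded weights and uniqueness of the necklace is not something to wave at; without a concrete construction and case check of this kind, your proof shows only that an ocycle forces a single necklace, not that a single necklace is impossible in the stated range of $k$, and the ``only if'' half remains incomplete.
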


\begin{proof}
    Suppose that $n-s > d$ where $d = \hbox{gcd}(n,s)$.  Construct the transition graph $G=G_s(m,n)$ with vertices representing $s$-prefixes and $s$-suffixes of elements in $\mathcal{B}_k(m,n)$ and edges representing elements of $\mathcal{B}_k(m,n)$, traveling from prefix to suffix.  First, at each vertex $X$ in $G$, for each $(n-s)$-suffix $Y$ for $X$ we have an out-edge representing the string $XY \in \mathcal{B}_k(m,n)$. Note that also $YX \in \mathcal{B}_k(m,n)$, and is represented by an in-edge to $X$.  This gives a bijection between in- and out-edges at $X$, and hence $G$ is balanced.

    There exist some $r,q \in \mathbb{Z}$ such that $0 \leq r < m-1$ with $k = q(m-1)+r$.  Define the minimum element $V$ of $\mathcal{B}_k(m,n)$ to be the minimum lexicographically, i.e. $V = 0 \cdots 0 r (m-1) \cdots (m-1)$.  Then the minimum vertex in $G$, call it $V^{s-}$, is the $s$-prefix of $V$.  Let $X^{s-} = x_1x_2 \ldots x_s$ be an arbitrary vertex in $G$.  Let $X \in \mathcal{B}_k(m,n)$ be an $m$-ary string with $s$-prefix $X^{s-}$.  We will prove that $G$ is connected by illustrating a path from $X^{s-}$ to $V^{s-}$.

    Using Theorem \ref{perms}, all permutations of the string $X$ are connected in $G$, so we may assume that $X = x_1x_2 \ldots x_n$ is ordered so that $x_1 \leq x_2 \leq \ldots \leq x_n$.  Compare $X$ with $V$, and let $i$ be the left-most index in which they disagree.  In other words, we have $x_1 = v_1$, $x_2 = v_2$, and so on until $x_{i-1} = v_{i-1}$, but $x_i \neq v_i$.  Since $V$ is the minimum string lexicographically, this implies that $x_i > v_i$.  Since $X$ and $V$ both have weight $k$, there must be some index $j > i$ such that $x_j < v_j$.  Using Theorem \ref{perms} again, we find a path to the $s$-prefix of the string $$X' = x_1 x_2 \ldots x_{i-1} x_{i+1} x_{i+2} \ldots x_{j-1} x_{j+1} x_{j+2} \ldots x_n x_i x_j.$$  Note that this $s$-prefix does not contain the last two letters, namely $x_i$ and $x_j$, so the $s$-prefix also has an out-edge representing the string $$X'' = x_1 x_2 \ldots x_{i-1} x_{i+1} x_{i+2} \ldots x_{j-1} x_{j+1} x_{j+2} \ldots x_n (x_i-1) (x_j+1).$$  Applying Theorem \ref{perms} again, we can find a path to the $s$-prefix of the string $$x_1 x_2 \ldots x_{i-1} (x_i-1) x_{i+1} x_{i+2} \ldots x_{j-1} (x_j+1) x_{j+1} x_{j+2} \ldots x_n.$$  This $s$-prefix is now closer to the minimum vertex, and repeating the process we will eventually arrive at $V^{s-}$.  Since $X^{s-}$ was an arbitrary starting vertex in $G$, $G$ is connected and hence eulerian by Theorem \ref{euler}.

    For the converse, suppose that $n-s = \hbox{gcd}(n,s)$.  Then, as discussed in \cite{Horan}, rotations of a string $X$ partition it into blocks of length $d = \hbox{gcd}(n,s)$.  Since $X$ has fixed weight and we may only reorder elements within blocks but not swap elements between blocks, each block has a fixed weight.  In other words, if $X = Y_1 Y_2 \ldots Y_\ell$ is $X$ partitioned into $d$-blocks with block $Y_i$ having weight $w_i$, then no manipulation of $X$ can alter the values $w_i$ corresponding to each $Y_i$.  Hence if we can always produce two strings $X,Z \in \mathcal{B}_k(m,n)$ so that the block sequences for $X$ and $Z$ are not simply rotations of each other, then we are done.

    First note that $k \geq 2$.  There are integers $q,r$ such that $0 \leq r < m-1$ and $k = (m-1)q+r$.  Define two strings:  $$A=0^{n-q-1}r(m-1)^q \hbox{ and } B=10^{n-q-2}r(m-1)^{q-1}(m-2),$$ with block sequences $A_1A_2 \ldots A_{\ell}$ and $B_1B_2 \ldots B_{\ell}$ respectively.  We will show through several cases that these two block sequences are distinct and are not identical or rotations of each other.
    \begin{enumerate}
        \item  If $n-q-1\geq d$ and $q \geq d$:

        In this case we know that:
        \begin{itemize}
            \item  $w(A_1)=0$,
            \item  $w(A_\ell)=(m-1)d$,
            \item  $w(B_1)=1$, and
            \item  $w(B_\ell)=(m-1)d-1$.
        \end{itemize}

        In this case our two block weight sequences are as follows, where $x=(m-1)d$ and $y=r+(m-1)j$ for some $j \leq d$.
        $$\begin{array}{lc|ccc|c|ccc|c}
            A= & 0 & 0 & \cdots & 0 & y & x & \cdots & x & x \\
            \hline
            B= & 1 & 0 & \cdots & 0 & y & x & \cdots & x & x-1
        \end{array}$$
        Note that if the weight sequence for $A$ contains a string of $\alpha$ consecutive 0's, then the sequence for $B$ contains a string of $\alpha-1$ consecutive 0's.  Thus these two strings can only be rotations of each other if $\alpha=1$ (or only the first block has weight $0$) and $x-1=0$.  Thus our block weight sequences are as follows.
        $$\begin{array}{lc|c|ccc|c}
            A= & 0 & 1 & 1 & \cdots & 1 & 1 \\
            \hline
            B= & 1 & 1 & 1 & \cdots & 1 & 0
        \end{array}$$
        Hence we have $(m-1)d=1$, which implies that $m=2$ and $d=1$, so given the block sequences we must have $k=n-1$ in order for the sequences to be rotations of each other.  However the initial conditions require that $k<n-1$, so these two sequences will never be rotations.

        \item  If $n-q-1 \geq d$ and $q < d$:

        In this case we know that:
        \begin{itemize}
            \item  $w(A_i)=0$ for $1 \leq i < \ell$,
            \item  $w(A_\ell)=k$,
            \item  $w(B_1) = 1$,
            \item  $w(B_i)=0$ for $2 \leq i < \ell$, and
            \item  $w(B_\ell)=k-1$.
        \end{itemize}

        In this case our two block weight sequences are as follows.
        $$\begin{array}{lc|ccc|c}
            A= & 0 & 0 & \cdots & 0 & k \\
            \hline
            B= & 1 & 0 & \cdots & 0 & k-1
        \end{array}$$
        These two block weight sequences are only rotations of each other if $k=1$, which is not allowed by the hypotheses.

        \item  If $n-q-1 < d$ and $q \geq d$:

        In this case we know that:
        \begin{itemize}
            \item  $w(A_1)=r+j(m-1)$ for some $0 \leq j < d$,
            \item  $w(A_i)=d(m-1)$ for $2 \leq i \leq \ell$,
            \item  $w(B_1) = r+j(m-1)+1$ for some $0 \leq j < d$,
            \item  $w(B_i)=d(m-1)$ for $2 \leq i < \ell$, and
            \item  $w(B_\ell)=d(m-1)-1$.
        \end{itemize}

        In this case our two block weight sequences are as follows, where $x=(m-1)d$ and $y=r+(m-1)j$ for some $j \leq d$.
        $$\begin{array}{lc|ccc|c}
            A= & y & x & \cdots & x & x \\
            \hline
            B= & y+1 & x & \cdots & x & x-1
        \end{array}$$
        These two block weight sequences are only rotations of each other if $y=(m-1)d-1$.  However in this case we must have $k=(m-1)n-1$, which is not allowed by the initial conditions.

        \item  If $n-q-1 < d$ and $q < d$:

        In this case we have that $A = 0^{n-q-1}r(m-1)^q$, but then we must have $(n-q-1)+1 + q \leq 2d-2$, or $n \leq 2d-2$, which is not possible since we must have $d \leq \frac{n}{2}$.
    \end{enumerate}
\end{proof}

Next we consider $m$-ary words with weights belonging to some fixed range.  That is, we consider $s$-ocycles on the set of \textbf{weight-range} $m$-ary words, defined as ($p<q$):
$$\mathcal{B}_p^q(m,n) = \{x=x_1x_2 \ldots x_n \in \mathcal{B}(m,n) \mid p \leq \hbox{wt}(x) \leq q\}.$$  This small amount of room in weights allows us to prove a much stronger result.

\begin{thm}
    For $m,n,s,p,q \in \mathbb{Z}$, with $1 \leq s<n$ and $0 \leq p < q \leq (m-1)n$, there exists an $s$-ocycle for $\mathcal{B}_p^q(m,n)$.
\end{thm}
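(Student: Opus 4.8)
The plan is to use the transition digraph $G = G_s(m,n)$ whose vertices are the $s$-prefixes and $s$-suffixes of strings in $\mathcal{B}_p^q(m,n)$ and whose edges are the strings themselves (directed from $s$-prefix to $s$-suffix), and to show that $G$ is both balanced and weakly connected, so that Theorem \ref{euler} delivers an Euler tour and hence the desired $s$-ocycle. Balance should follow by exactly the argument used in the previous theorem: for any vertex $X = x_1 \cdots x_s$ and any $(n-s)$-suffix $Y$ with $p \le \mathrm{wt}(XY) \le q$, the string $XY$ is an out-edge at $X$ precisely when $YX$ is an in-edge at $X$, and $\mathrm{wt}(XY) = \mathrm{wt}(YX)$, so the weight constraint is preserved under this swap. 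This gives a bijection between in-edges and out-edges at every vertex, so $G$ is balanced.

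The substance of the proof is weak connectivity. First I would handle the degenerate cases where $G$ may have isolated structure: if $p=0$ the all-zeros string contributes the vertex $0^s$, and if $q=(m-1)n$ the all-$(m-1)$ string contributes $(m-1)^s$; I want to verify these vertices are not stranded. The main idea for connectivity is to exhibit a path from an arbitrary vertex $X^{s-}$ to a fixed canonical vertex. Here the extra freedom in the weight range is what makes life easier than in the fixed-weight case: within any single weight class $\mathcal{B}_k(m,n)$ with $p \le k \le q$, I can try to invoke Theorem \ref{perms} to move freely among permutations of a string, and crucially I can also move \emph{between} adjacent weight classes $k$ and $k+1$ by incrementing a single letter, which adds one edge connecting the two layers. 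So the strategy is: use permutation-connectivity within a layer to reach a convenient string, then step up or down one unit in weight to migrate all vehicles toward a common weight and a common canonical representative, analogous to the path-to-minimum argument in the preceding theorem.

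The hard part will be that Theorem \ref{perms} carries the hypothesis $n - s > \gcd(n,s)$, which is precisely the condition that the present theorem drops; when $n-s = \gcd(n,s)$ the permutations of a fixed multiset split into rotation-blocks and are \emph{not} all connected, so I cannot appeal to Theorem \ref{perms} to shuffle within a weight layer. I expect this to be the central obstacle, and the role of the weight range $p < q$ is exactly to circumvent it: even when intra-layer permutation moves are blocked by the block structure, I can change the weight by $\pm 1$ (since $q > p$ guarantees at least two available weights), and a unit weight change on a single coordinate breaks the rigid block-weight invariant that obstructed connectivity in the converse direction of the previous theorem. Concretely, I would argue that from any string I can reach a canonical vertex by a sequence of single-coordinate increments/decrements (staying within $[p,q]$) interleaved with rotations, and that this process terminates at a common target regardless of the starting block structure.

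\begin{rem}
The key contrast with the fixed-weight theorem is that weight range $p<q$ removes the $n-s>\gcd(n,s)$ restriction entirely, since the obstruction in that setting was precisely the rotation-invariant block-weight sequence, and allowing the weight to vary by a single unit destroys that invariant.
\end{rem}
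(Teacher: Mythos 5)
Your setup matches the paper's: transition digraph, balance via the $XY \leftrightarrow YX$ bijection (weight-preserving, so it respects the range), and connectivity by routing every vertex to a canonical minimum vertex, with the weight slack $p<q$ used precisely to defeat the block-weight obstruction that kills the fixed-weight case at $n-s=\gcd(n,s)$. You have correctly diagnosed the central obstacle. But the proposal stops exactly where the proof begins: the sentence ``Concretely, I would argue that from any string I can reach a canonical vertex by a sequence of single-coordinate increments/decrements interleaved with rotations, and that this process terminates'' is the entire content of the theorem, and you give neither the moves nor a termination argument. Two concrete pieces are missing. First, you never establish what a legal ``move'' in the digraph is. The paper's key observation is that walking edges realizes rotations by multiples of $d=\gcd(n,s)$, partitioning each string into blocks of length $d$, and since $d \mid (n-s)$ one can always rotate so that a chosen \emph{full block} lies in the hidden $(n-s)$-suffix and then replace that block wholesale (by any block keeping the total weight in $[p,q]$). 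This is what makes block modification and intra-block sorting well-defined digraph paths without any appeal to Theorem \ref{perms} --- which, as you note, is unavailable here; the paper in fact never invokes it for this theorem, whereas your plan still leans on it for the $n-s>\gcd(n,s)$ layers, an unnecessary case split.

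Second, you have no convergence argument. The paper's is a two-phase scheme with an explicit potential: (1) while $\mathrm{wt}(x)>p$, rotate a maximum-weight block (which has positive weight) into the suffix and replace it with a lighter one, until $\mathrm{wt}(x)=p$; (2) at weight $p$, compare the block-weight sequence of $x$ with that of the canonical string $v = 0\cdots 0\, b\, (m-1)\cdots(m-1)$ (where $p=a(m-1)+b$, $0\le b<m-1$); if they differ, pick $i,j$ with $\mathrm{wt}(X_i)<\mathrm{wt}(V_i)$ and $\mathrm{wt}(X_j)>\mathrm{wt}(V_j)$, raise $X_i$ by one (legal since the intermediate weight $p+1\le q$ --- this is the only place $p<q$ is used), then lower $X_j$ by one; each round decreases $\sum_{i}\left|\mathrm{wt}(X_i)-\mathrm{wt}(V_i)\right|$ by exactly two, so the process halts, after which sorting one block at a time lands on $v$. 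Without something like this measure, ``interleaved increments/decrements'' does not obviously terminate at a common target. Minor point: your worry about the vertices $0^s$ (when $p=0$) and $(m-1)^s$ (when $q=(m-1)n$) being stranded is a non-issue --- every vertex of the digraph is by definition a prefix or suffix of some string in the set, and the path construction above applies to it uniformly.
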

\begin{proof}
    Construct the transition digraph $\overrightarrow{T}$.  First, note that any $s$-prefix is also an $s$-suffix, so $\overrightarrow{T}$ is balanced.

    To show connectivity, we denote a minimum vertex and show a path always exists from an arbitrary vertex to the minimum vertex.  We will define the minimum vertex of the digraph as follows.  Write $p= a(m-1)+b$ for some $0 \leq b < m-1$.  Then the minimum vertex $v$ is defined to be the $s$-prefix of the string $0 \cdots 0 b (m-1) \cdots (m-1)$.  In other words, $v$ is the $s$-prefix of the minimum lexicographic string in the set.

    Now we will prove that the digraph is weakly connected.  Let $x=x_1x_2 \ldots x_n$ and consider the vertex given by the $s$-prefix, $x_1x_2 \ldots x_s$.  As usual, rotations of $x$ in the digraph correspond to a partition of $x$ into blocks of size $d=\hbox{gcd}(n,s)$.  Note that through rotations, we can always ``hide" at least one block of $x$ in the $(n-s)$-suffix of the string.  Thus we can modify \textit{at least} one block at a time; however, as seen in the previous proof we cannot always assume it is possible to modify \textit{more} than one block at a time without additional requirements.  Through this process, we may assume that a valid operation is one that modifies exactly one block in each step.  Thus our path construction consists of the following two parts.

    \begin{enumerate}
        \item  If $\hbox{wt}(x) > p$:

        Consider the block partition of $x$, given by $x = X_1X_2 \ldots X_\delta$ where $\delta = n/d$.  Suppose that block $X_i$ has maximum weight.  Then since $\hbox{wt}(x) > p$, we must have $\hbox{wt}(X_i) > 0$, so we rotate until block $X_i$ is in the $(n-s)$-prefix of $x$ (i.e. $X_i$ does not appear in the $s$-prefix).  At this point we can modify the block $X_i$ and replace it with some block $X_i'$ with smaller weight.  If we are now considering an $n$-string with weight $p$ continue to step 2.  Otherwise, repeat step 1.

        \item  If $\hbox{wt}(x) = p$:

        Compare the block weights of $x$ and $v$.  If the block weight sequences are equal (i.e. $\hbox{wt}(X_i) = \hbox{wt}(V_i)$ for all $i \in [\delta]$), then repeatedly sort one block of $x$ at a time to arrive at $v$. Otherwise, the block weight sequences differ in at least two positions.  That is, there exists $i,j$ such that $\hbox{wt}(X_i) <  \hbox{wt}(V_i)$ and $\hbox{wt}(X_j) > \hbox{wt}(V_j)$.  Then we first increase the weight of $X_i$ by one (which must be possible since $V_i$ is on the same $m$-ary alphabet), and afterwards we decrease the weight $X_j$ by one (which must be possible since $\hbox{wt}(X_j) > \hbox{wt}(V_j)$).  Now either the block weight sequences for $x$ and $v$ are identical and we are done, or we repeat step 2 until that happens.  Note that at each step we are decreasing the measure $\sum_{i=1}^\delta |\hbox{wt}(X_i)-\hbox{wt}(V_i)|$ by two, and so the process will terminate when that measure reaches zero.
    \end{enumerate}
\end{proof}

\section{Future Work}\label{FW}

We leave the following question as a possible future direction for this area of research.  Is there a Gray code for weight-range $m$-ary strings such that the strings differ in exactly one position?  Note that this would solve the famous middle-levels problem \cite{MLP}.

\bibliographystyle{amsplain}

\end{document}